\documentclass[11pt]{amsart}
\usepackage{geometry}
\geometry{left=3cm,right=3.1cm,top=3.1cm,bottom=2.5cm}
\usepackage{amsmath}
\usepackage{amsfonts}
\usepackage{amsthm}
\usepackage{amssymb}
\allowdisplaybreaks[2]
\newtheorem{theorem}{Theorem}[section]

\theoremstyle{definition}

\newtheorem{lemma}{Lemma}[section]
\newtheorem{corollary}{Corollary}[section]


\theoremstyle{theorem}
\newtheorem{other}{\bf Theorem}              



\DeclareMathOperator{\defeq}{\overset{def}=}

\numberwithin{equation}{section}



\newcommand{\hol}{{\mathcal Hol}}
\DeclareMathOperator{\og}{O}

\newcommand{\h}{\mathcal{H}}
\newcommand{\hu}{\mathcal{H}_\mu}
\newcommand{\Hu}{\mathcal{H}_\mu}
\newcommand{\iu}{I_\mu}
\newcommand{\om}{\omega}
\newcommand{\lao}{\Lambda(p,\omega)}

\newcommand{\aw}{\mathcal{AW}}
\newcommand{\eiteta}{e^{i\theta}}

\newcommand{\B}{\mathcal{B}}

\def\D{{\mathbb D}}


\begin{document}

\title{Mean Lipschitz spaces and a generalized Hilbert operator}

\author{Noel Merch\'{a}n}
\address{An\'{a}lisis Matem\'{a}tico, Facultad de Ciencias, Universidad de M\'{a}laga, 29071 M\'{a}laga, Spain}
\email{noel@uma.es}

\thanks{This research is supported in part by a grant from \lq\lq El Ministerio de
Econom\'{\i}a y Competitividad\rq\rq , Spain (MTM2014-52865-P) and
by a grant from la Junta de Andaluc\'{\i}a FQM-210. The author is also supported by a grant from \lq\lq El Ministerio de de
Educaci\'{o}n, Cultura y Deporte\rq\rq , Spain (FPU2013/01478).}

\subjclass[2010]{Primary 30H10; Secondary 47B35.}

\keywords{Hankel matrix, Generalized Hilbert operator, Mean
Lipschitz spaces, Carleson measures}

\begin{abstract} If $\mu $ is a positive Borel measure on the interval $[0, 1)$ we
let $\mathcal H_\mu $ be the Hankel matrix $\mathcal H_\mu =(\mu
_{n, k})_{n,k\ge 0}$ with entries $\mu _{n, k}=\mu _{n+k}$, where,
for $n\,=\,0, 1, 2, \dots $, $\mu_n$ denotes the moment of order $n$
of $\mu $. This matrix induces formally the operator
$$\mathcal{H}_\mu (f)(z)=
\sum_{n=0}^{\infty}\left(\sum_{k=0}^{\infty}
\mu_{n,k}{a_k}\right)z^n$$ on the space of all analytic functions
$f(z)=\sum_{k=0}^\infty a_kz^k$, in the unit disc $\mathbb{D} $. This is a
natural generalization of the classical Hilbert operator. In this
paper we study the action of the operators $\mathcal H_\mu $ on mean Lipschitz
spaces of analytic functions.
\end{abstract}

\maketitle

\section{Introduction and main results}\label{intro}
Let $\mathbb D $ be the unit disc in the complex plane $\mathbb
{C}$, and let $\hol (\mathbb D )$ denote the space of all analytic
functions in $\mathbb D$. For $0<r<1$ and $f\in \hol (\mathbb D )$,
we set
$$M_ p(r, f)=\left (
\frac{1}{2\pi }\int_ {-\pi }\sp \pi \left \vert f(re\sp {i\theta })
\right \vert \sp pd\theta \right )\sp {1/p}, \quad 0<p<\infty , $$
\smallskip
$$M_ \infty (r, f)=\max_ {\vert z\vert =r}\vert f(z)\vert .$$
For $0<p\leq \infty $ the Hardy space $H\sp p$ consists of those
functions $f$, analytic in $\mathbb D$, for which
$$\Vert f \Vert  _ {H\sp p}=\sup_ {0<r<1}
M_ p(r, g)<\infty .$$ We refer to \cite{D} for the theory of Hardy
spaces.
\par
The space $BMOA$ consists of those functions $f\in H\sp 1$ whose
boundary values have bounded mean oscillation on $\partial \mathbb
D$. The Bloch space $\B$ consists of all analytic functions $f$ in
$\D$ with bounded invariant derivative:
$$f\in \mathcal B \,\,\,\Leftrightarrow\,\,\,\Vert f\Vert _
{\mathcal B }\defeq \vert f(0)\vert +\sup\sb {z \in {\mathbb D
}}\,(1-|z|\sp2)\,|f\sp\prime(z)|<\infty \,. $$ We mention \cite{ACP,
G:BMOA, Zhu-book} as excellent references for these spaces. Let us
recall that  $BMOA \subsetneq \B$.
\par
If\, $\mu $ is a finite positive Borel measure on $[0, 1)$ and $n\,
= 0, 1, 2, \dots $, we let $\mu_n$ denote the moment of order $n$ of
$\mu $, that is, $\mu _n=\int _{[0,1)}t^n\,d\mu (t),$ and we let
$\mathcal H_\mu $  be the Hankel matrix $(\mu _{n,k})_{n,k\ge 0}$
with entries $\mu _{n,k}=\mu_{n+k}$. The matrix $\mathcal H_\mu $
induces formally an operator, also denoted $\mathcal H_\mu $, on
spaces of analytic functions in the following way: if\,
 $f(z)=\sum_{k=0}^\infty a_kz^k\in \hol (\D )$
we define
\begin{equation*}\label{H}
\mathcal{H}_\mu (f)(z)= \sum_{n=0}^{\infty}\left(\sum_{k=0}^{\infty}
\mu_{n,k}{a_k}\right)z^n,
\end{equation*}
whenever the right hand side makes sense and defines an analytic
function in $\D $.
\par\smallskip If $\mu $ is the Lebesgue measure on $[0,1)$ the matrix
$\mathcal H_\mu $ reduces to the classical Hilbert matrix \,
$\mathcal H= \left ({(n+k+1)^{-1}}\right )_{n,k\ge 0}$, which
induces the classical Hilbert operator $\h$. The Hilbert operator is
known to be well defined on $H^1$ and bounded from $H^p$ into
itself, if $1<p<\infty $, but not if $p=1$ or $p=\infty $
\cite{DiS}.
\par
The question of describing the measures $\mu $ for which the
operator $\mathcal H_\mu $ is well defined and bounded on distinct
spaces of analytic functions has been studied in a good number of
papers (see \cite{Bao-Wu, Ch-Gi-Pe, Ga-Pe2010, GM1, GM2, Pell, Pow,
Wi}). The measures in question are Carleson-type measures.
\par If  $I\subset \partial\D$ is an
interval, $\vert I\vert $ will denote the length of $I$. The
\emph{Carleson square} $S(I)$ is defined as
$S(I)=\{re^{it}:\,e^{it}\in I,\quad 1-\frac{|I|}{2\pi }\le r <1\}$.
\par If $\, s>0$ and $\mu$ is a positive Borel  measure on  $\D$,
we shall say that $\mu $
 is an $s$-Carleson measure
  if there exists a positive constant $C$ such that
\[
\mu\left(S(I)\right )\le C{|I|^s}, \quad\hbox{for any interval
$I\subset\partial\D $}.
\]
\par  A $1$-Carleson
measure will be simply called a Carleson measure.
\par If $\mu$ is a positive Borel measure on $\D$, $0\le
\alpha <\infty $, and $0<s<\infty $ we say that $\mu$ is an
 $\alpha$-logarithmic $s$-Carleson measure \cite{Zhao} if there exists a positive
 constant $C$ such that
 \[\frac{
\mu\left(S(I)\right )\left(\log \frac{2\pi }{\vert I\vert }\right
)^\alpha }{|I|^s}\le C, \quad\hbox{for any interval
$I\subset\partial\D $}.
\]
\par
A positive Borel measure $\mu $ on $[0, 1)$ can be seen as a Borel
measure on $\mathbb D$ by identifying it with the measure $\tilde
\mu $ defined by $$ \tilde \mu (A)\,=\,\mu \left (A\cap [0,1)\right
),\quad \text{for any Borel subset $A$ of $\mathbb D$}.$$  In this
way a positive Borel measure $\mu $ on $[0, 1)$ is an $s$-Carleson
measure if and only if there exists a positive constant $C$ such
that
\[
\mu\left([t,1)\right )\le C(1-t)^s, \quad 0\le t<1.
\]
We have a similar statement for $\alpha$-logarithmic $s$-Carleson
measures.
\par Widom \cite[Theorem\,\@3.\,\@1]{Wi} (see also
\cite[Theorem\,\@3]{Pow} and \cite[p.\,\@42,
Theorem\,\@7.\,\@2]{Pell}) proved that $\mathcal H_\mu $ is a
bounded operator from $H^2$ into itself if and only $\mu $ is a
Carleson measure. Galanopoulos and Pel\'{a}ez \cite{Ga-Pe2010}
studied the operators $\mathcal H_\mu $ acting on $H^1$. The action
of $\mathcal H_\mu $ on the Hardy spaces $H^p$, $0<p\le \infty $,
has been studied in \cite{Ch-Gi-Pe, GM1, GM2}. The papers \cite{GM1}
and \cite{GM2} study also the operators $H_\mu $ acting on distinct
subspaces of the Bloch space, including $BMOA$, Besov spaces, and
the $Q_s$-spaces.
\par In this paper we shall study the operators $H_\mu $ acting on
mean Lipschitz spaces of analytic functions.
\par
If $f\in \hol (\mathbb D )$ has a non-tangential limit $f(e\sp
{i\theta })$ at almost every $e\sp {i\theta }\in \partial\mathbb D$
and $\delta >0$,  we define
\begin{align*}\omega _ p(\delta , f)= &\sup_{0<\vert t\vert \leq \delta }\left
(\frac{1}{2\pi }\int _{-\pi }\sp \pi \left \vert f(e\sp {i(\theta
+t)})-f(e\sp {i\theta })\right \vert \sp p\,
d\theta \right )\sp {1/p}, \quad\text{if $1\leq p<\infty $},\\
\omega _ \infty (\delta , f)=& \sup_ {0<\vert t\vert \leq \delta
}\left (\operatornamewithlimits{ess.sup}_ {\theta\in [-\pi, \pi ]}
\vert f(e\sp {i(\theta +t)})-f(e\sp {i\theta })\vert\right
).\end{align*}  Then $\omega _ p(\cdot , f)$ is the integral modulus
of continuity of order $p$ of the boundary values $f(e\sp {i\theta
})$ of $f$.
\par
Given $1\leq p\leq \infty $ and $0<\alpha \leq 1$, the mean
Lipschitz space $\Lambda _ \alpha \sp p$ consists of those functions
$f\in\hol (\mathbb D)$ having a non-tangential limit almost
everywhere for which $\omega _ p(\delta , f)=O(\delta \sp \alpha )$,
as $\delta \to 0$. If $p=\infty $ we write $\Lambda _ \alpha $
instead of $\Lambda _ \alpha \sp \infty $. This is the usual
Lipschitz space of order $\alpha $.
\par
A classical result of Hardy and Littlewood \cite{HL-1932} (see also
\cite[Chapter~5]{D}) asserts that for $1\leq p\leq \infty $ and
$0<\alpha \leq 1$, we have that $\Lambda _ \alpha \sp p\subset H\sp
p$ and \begin{equation}\label{lamba-p-alpha}\Lambda _ \alpha \sp
p=\left \{ f\in \hol (\mathbb D) : M_ p(r, f\sp\prime )=\og\left
(\frac{1}{(1-r)\sp {1-\alpha }}\right ) \right \}.\end{equation} It
is known that if $1<p<\infty$ and $\alpha>\frac{1}{p}$ then each
$f\in\Lambda^p_\alpha$ is bounded and has a continuous extension to
the closed unit disc (\cite{BSS}, p.88). This is not true for
$\alpha=\frac{1}{p}$, because the function $f(z)=\log(1-z)$ belongs
to $\Lambda^p_{1/p}$ for all $p\in(1,\infty)$. By a theorem of Hardy
and Littlewood \cite[Theorem 5.9]{D} and  of \cite[Theorem 2.5]{BSS}
we have
$$\Lambda^p_{1/p}\subset\Lambda^q_{1/q}\subset BMOA\quad 1\le p<q<\infty .$$
\par
The inclusion $\Lambda^p_{1/p}\subset BMOA$, $1\le p<\infty$ was
proved to be sharp in a very strong sense in \cite{BGM, G:Normal,
G:Illi}
 using
the following generalization of the spaces $\Lambda _ \alpha \sp p$
which occurs frequently in the literature. Let $\omega :[0, \pi
]\rightarrow [0, \infty )$ be a continuous and increasing function
with $\omega (0)=0$ and $\omega (t)>0$ if $t>0$. Then, for $1\leq
p\leq \infty $, the mean Lipschitz space $\Lambda (p, \omega )$
consists of those functions $f\in H\sp p$ such that
$$\omega_ p(\delta , f)=O(\omega (\delta )),\quad\hbox{as $\delta \to 0$}.$$
With this notation we have $\Lambda \sp p_ \alpha =\Lambda (p,
\delta\sp\alpha )$.
\par
The question of finding conditions on $\omega $ so that it is
possible to obtain results on the spaces $\Lambda (p, \omega )$
analogous to those proved by Hardy and Littlewood for the spaces
$\Lambda \sp p_ \alpha $ has been studied by several authors (see
\cite{BS1, BS2, BSS}). We shall say that $\omega $ satisfies the
Dini condition or that $\omega $ is a Dini-weight if there exists a
positive constant $C$ such that
$$\int_ 0\sp\delta \frac{\omega (t)}{t}\, dt\leq C\omega (\delta ),
\quad 0<\delta <1.$$ We shall say that $\omega $ satisfies the
condition $b_1$ or that $\omega \in b_1$ if there exists a positive
constant $C$ such that
$$\int_ \delta\sp\pi
\frac{\omega (t)}{t\sp {2}}\, dt\leq C\frac{\omega (\delta )}{\delta
}, \quad 0<\delta <1.$$
\par In order to simplify our notation, let $\mathcal A\mathcal W$ denote the family of
all functions $\omega :[0, \pi ]\rightarrow [0, \infty )$ which
satisfy the following conditions: \begin{itemize}\item[(i)] $\omega
$ is continuous and increasing in $[0, \pi ]$.
\item[(ii)] $\omega (0)=0$ and $\omega (t)>0$ if $t>0$.
\item[(iii)] $\omega $ is a Dini-weight.
\item[(iv)] $\omega $ satisfies
the condition $b_1$.\end{itemize}
\par The elements of $\mathcal A\mathcal W$ will be called admissible weights.
Characterizations and examples of admissible weights can be found in
\cite{BS1, BS2}. \par Blasco and de Souza extended the above
mentioned result of Hardy and Littlewood showing in \cite[Th.
2.1]{BS1} that if $\omega\in \mathcal A\mathcal W$ then,
$$\Lambda (p, \omega )=\left \{ \text{$f$ analytic in $\D $ :
$M_ p(r, f\sp\prime )=\og\left ( \frac{\omega (1-r)}{1-r}\right ),$
as $r\to 1$}\right \} .$$
\par
In \cite{BGM, G:Normal,G:Illi} it is proved that if $1\le p<\infty $
and $\om$ is an admissible weight such that
 $$\frac{\om(\delta)}{\delta^{1/p}}\to\infty ,\text{ as }\delta\to 0,$$
 then there exists a function $f\in\lao$ which is a not a normal
 function (see \cite{ACP} for the definition). Since any Bloch function is normal, if follows that for such admissible
 weights $\om $ one has that $\Lambda (p, \om )\not\subset \mathcal
 B$.

\par One of the main results in \cite{GM2} is the following one.
\begin{other}[\cite{GM2}]\label{th-X} Let
$\mu $ be a positive Borel measure on $[0,1)$ and let $X$ be a
Banach space of analytic functions in $\mathbb D$ with
$\Lambda^2_{1/2}\subset X\subset \mathcal B$. Then the following
conditions are equivalent.
\begin{itemize}
\item[(i)] The operator $\mathcal H_\mu $ is well defined in $X$ and, furthermore, it is a bounded operator from $X$ into he Bloch space $\mathcal B$.
\item[(ii)] The operator $\mathcal H_\mu $ is well defined in $X$ and, furthermore, it is a bounded operator from $X$ into $\Lambda^2_{1/2}$.
\item[(iii)] The measure $\mu $ is a
$1$-logarithmic $1$-Carleson measure.
\item[(iv)] $\int_{[0,1)}t^n\log\frac{1}{1-t}d\mu (t)\,=\,\og \left
(\frac{1}{n}\right )$.
\end{itemize}
\end{other}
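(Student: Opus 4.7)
The plan is to establish the cycle (ii) $\Rightarrow$ (i), (i) $\Rightarrow$ (iv), (iv) $\Rightarrow$ (iii), (iii) $\Rightarrow$ (ii), together with the direct verification that (iii) and (iv) are two equivalent formulations of the same condition on $\mu$.

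The implication (ii) $\Rightarrow$ (i) is immediate from the chain $\Lambda^2_{1/2}\subset BMOA\subset \mathcal{B}$ recalled in the introduction. For (iii) $\Leftrightarrow$ (iv) I would translate the tail bound $\mu([1-\delta,1))\le C\delta/\log(2/\delta)$ into the moment bound of (iv) and back. One direction is by integration by parts with $F(t)=\mu([t,1))$, rewriting $\int t^n\log(1/(1-t))\,d\mu(t)$ as $\int F(t)\bigl(nt^{n-1}\log(1/(1-t))+t^n/(1-t)\bigr)\,dt$ and applying the $1$-log $1$-Carleson bound. The reverse direction uses $t^n\ge 1/e$ and $\log(1/(1-t))\ge \log n$ on $[1-1/n,1)$, so that the moment bounds the tail directly.

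For (i) $\Rightarrow$ (iv) I would test $\mathcal{H}_\mu $ on the concrete function $f(z)=\log(1/(1-z))$, which lies in $\Lambda^2_{1/2}\subset X$ by the theorem of Hardy and Littlewood quoted in the introduction. A term-by-term computation gives $\mathcal{H}_\mu (f)(z)=\sum_{n\ge 0}b_n z^n$ with $b_n=\int_{[0,1)}t^n\log(1/(1-t))\,d\mu(t)$. Since the $b_n$ are nonnegative and monotonically decreasing in $n$, evaluating the Bloch condition $(1-r)|(\mathcal{H}_\mu f)'(r)|\le C\|\mathcal{H}_\mu f\|_{\mathcal{B}}$ along the real ray at $r=1-1/N$ and bounding the positive series $\sum n b_n r^{n-1}$ from below by the block of terms with $N/2\le n\le N$ forces $b_N=O(1/N)$, which is (iv).

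The main work is (iii) $\Rightarrow$ (ii). The essential tool is the integral representation
$$\mathcal{H}_\mu (f)(z)=\int_{[0,1)}\frac{f(t)}{1-tz}\,d\mu(t),$$
obtained by expanding $1/(1-tz)$ geometrically and exchanging sum and integral, so that $(\mathcal{H}_\mu f)'(z)=\int_{[0,1)}tf(t)/(1-tz)^2\,d\mu(t)$. For $f\in X\subset\mathcal{B}$ the standard Bloch pointwise estimate gives $|f(t)|\le C\|f\|_X \log(2/(1-t))$, so that introducing the auxiliary measure $d\lambda(t)=\log(2/(1-t))\,d\mu(t)$ one obtains $|(\mathcal{H}_\mu f)'(z)|\le C\|f\|_X \int_{[0,1)} |1-tz|^{-2}\,d\lambda(t)$. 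The same integration-by-parts argument used for (iii) $\Leftrightarrow$ (iv) shows that the $1$-logarithmic $1$-Carleson condition on $\mu$ is equivalent to $\lambda$ being an ordinary $1$-Carleson measure on $[0,1)$. Minkowski's integral inequality in $L^2(d\theta/2\pi)$, together with the classical evaluation $\int_{-\pi}^{\pi}|1-tre^{i\theta}|^{-4}\,d\theta \asymp (1-tr)^{-3}$, then yields
$$M_2\bigl(r,(\mathcal{H}_\mu f)'\bigr)\le C\|f\|_X\int_{[0,1)}\frac{d\lambda(t)}{(1-tr)^{3/2}},$$
and the standard Carleson-type estimate $\int (1-tr)^{-\alpha}\,d\lambda(t)\le C(1-r)^{1-\alpha}$ with $\alpha=3/2$ delivers $O((1-r)^{-1/2})$. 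By the Hardy--Littlewood description \eqref{lamba-p-alpha}, this places $\mathcal{H}_\mu (f)$ in $\Lambda^2_{1/2}$ with norm controlled by $\|f\|_X$. The most delicate point is justifying the integral representation and the application of Fubini for general $f\in\mathcal{B}$; a routine workaround is to first argue with the dilates $f_r(z)=f(rz)$ and then pass to the limit using the bounds just derived.
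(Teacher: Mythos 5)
This statement is the paper's Theorem~A, which is quoted from \cite{GM2} and not proved here, so there is no internal proof to compare against; the closest thing is the paper's proof of Theorem~1.2, which uses exactly the machinery you propose (test functions with decreasing coefficients for necessity; the integral operator $I_\mu$, the pointwise growth estimate, Minkowski's inequality in $L^p(d\theta)$, the estimate $\int_{-\pi}^{\pi}|1-tre^{i\theta}|^{-2p}d\theta\asymp(1-tr)^{1-2p}$, and Carleson-type moment bounds for sufficiency). Your architecture is the right one: (ii)$\Rightarrow$(i) via $\Lambda^2_{1/2}\subset BMOA\subset\mathcal B$, (iii)$\Leftrightarrow$(iv) by the integration-by-parts/tail computations you describe, (i)$\Rightarrow$(iv) by testing on $\log\frac{1}{1-z}$ and invoking the decreasing-coefficient characterization (this is precisely Lemma~1.1 of the paper with $X=\mathcal B$ applied to $\sum b_nz^n$), and (iii)$\Rightarrow$(ii) by the $I_\mu$--Minkowski--Carleson chain with $d\lambda(t)=\log\frac{2}{1-t}\,d\mu(t)$. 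All of those estimates check out.

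The one genuine gap is the point you yourself flag and then dismiss as routine: the well-definedness of $\mathcal H_\mu(f)$, i.e.\ the convergence of $\sum_k\mu_{n+k}a_k$ for every $f\in X$, which is part of assertion (ii) and must be proved under hypothesis (iii). For $f\in\mathcal B$ the coefficients are only bounded, and for a $1$-logarithmic $1$-Carleson measure one has merely $\mu_k\lesssim\frac{1}{k\log k}$, so $\sum_k\mu_{n+k}|a_k|$ cannot be controlled by boundedness of the $a_k$ alone ($\sum_k\frac{1}{k\log k}$ diverges; equivalently $\int_{[0,1)}\frac{d\mu(t)}{1-t}$ may be infinite). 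Your proposed workaround via dilates $f_r$ does not close this: passing to the limit in $\sum_k\mu_{n+k}a_kr^k$ as $r\to1$ is an Abel-summation statement that presupposes the convergence you are trying to establish, and there is no obvious integrable majorant. The correct fix (the one used in \cite{GM2}) is to prove absolute convergence directly by Cauchy--Schwarz over dyadic blocks: for $f\in\mathcal B$ one has $\sum_{k=2^j}^{2^{j+1}-1}|a_k|^2\lesssim\|f\|_{\mathcal B}^2$, while $\bigl(\sum_{k=2^j}^{2^{j+1}-1}\mu_{n+k}^2\bigr)^{1/2}\lesssim 2^{-j/2}j^{-1}$, and the resulting bound $\sum_j 2^{-j/2}j^{-1}<\infty$ gives $\sum_k\mu_{n+k}|a_k|<\infty$ and legitimizes Fubini and the identity $\mathcal H_\mu(f)=I_\mu(f)$. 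With that step supplied, your proof is complete.
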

\par\medskip
A key ingredient in the proof of Theorem\,\@\ref{th-X} is the fact
that for any space $X$ with $\Lambda^2_{1/2}\subset X\subset
\mathcal B$ the functions $f\in X$ of the form
$f(z)=\sum_{n=0}^\infty a_nz^n$ whose sequence of Taylor
coefficients $\{ a_n\} $ is a decreasing sequence of non-negative
numbers are the same. Indeed, for such a function $f$ and such a
space $X$ we have that $f\in X\,\,\Leftrightarrow\,\, a_n=\og \left
(\frac{1}{n}\right ).$ This result remains true if we substitute
$\Lambda^2_{1/2}$ by $\Lambda^p_{1/p}$ for any $p>1$. That is, the
following result holds:
\begin{lemma}\label{ThDeclambdap} Suppose that $1<p<\infty$ and let
$f\in \hol (\D )$ be of the form  $f(z)=\sum_{n=0}^\infty a_nz^n$
with $\{ a_n\}_{n=0}^\infty $ being a decreasing sequence of
nonnegative numbers. If $X$ is a subspace of $\hol (\D )$ with
$\Lambda^p_{1/p}\subset X\subset \mathcal B$, then
$$f\in X\quad \Leftrightarrow\quad a_n=\og \left (\frac{1}{n}\right
).$$
\end{lemma}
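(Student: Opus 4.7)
\medskip\noindent\textbf{Proof plan.} The two implications rely on different techniques.

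$(\Rightarrow)$ Only the inclusion $X\subset\B$ is used. Since $f\in\B$, we have $(1-|z|^2)|f'(z)|\leq\|f\|_\B$ for all $z\in\D$. Evaluating on the real axis at $z=r=1-\frac{1}{N}$ and using that $f'(r)=\sum_{n\geq 1}na_nr^{n-1}$ is a sum of nonnegative terms, I restrict the sum to the block $N\leq n\leq 2N$ and invoke monotonicity of $\{a_n\}$ to obtain $f'(r)\geq a_{2N}\sum_{n=N}^{2N}nr^{n-1}\geq c\,a_{2N}N^2$ (for some absolute $c>0$, since $r^{2N}$ stays bounded below). Comparing with the Bloch upper estimate $f'(r)\leq\|f\|_\B/(1-r)=\|f\|_\B\,N$ yields $a_{2N}=\og(1/N)$, hence $a_n=\og(1/n)$.

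$(\Leftarrow)$ Since $\Lambda^p_{1/p}\subset X$, it suffices to prove that $f\in\Lambda^p_{1/p}$ whenever $a_n=\og(1/n)$. I shall verify the defining inequality $\omega_p(\delta,f)=\og(\delta^{1/p})$ through a dyadic block analysis. Write $f=a_0+\sum_{k\geq 0}\Delta_k f$ with $\Delta_k f(z)=\sum_{n=2^k}^{2^{k+1}-1}a_nz^n=z^{2^k}g_k(z)$, where $g_k(z)=\sum_{j=0}^{2^k-1}a_{2^k+j}z^j$ has a decreasing nonnegative sequence of Taylor coefficients all bounded by $a_{2^k}\leq C/2^k$. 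The classical Hardy--Littlewood theorem (\cite[Ch.~6]{D}) for $H^p$ norms of power series with nonincreasing coefficients then gives
\[
\|\Delta_k f\|_{H^p}^p=\|g_k\|_{H^p}^p\leq C\sum_{j=0}^{2^k-1}(j+1)^{p-2}a_{2^k+j}^p\leq C'\cdot 2^{-kp}\cdot 2^{k(p-1)}=\frac{C'}{2^k},\qquad 1<p<\infty,
\]
and hence $\|\Delta_k f\|_{H^p}\leq C\,2^{-k/p}$.

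Combine this with the trivial bound $\|\Delta_k f(\cdot+t)-\Delta_k f(\cdot)\|_p\leq 2\|\Delta_k f\|_p$ and the derivative bound $\|\Delta_k f(\cdot+t)-\Delta_k f(\cdot)\|_p\leq |t|\,\|(\Delta_k f)'\|_p\leq c\,2^k|t|\,\|\Delta_k f\|_p$ (the latter by Bernstein's inequality for the trigonometric polynomial $\Delta_k f$ of degree $\leq 2^{k+1}$). Taking the minimum for each $k$ gives $\|\Delta_k f(\cdot+t)-\Delta_k f(\cdot)\|_p\leq C\min\bigl(2^{-k/p},\,|t|\,2^{k(1-1/p)}\bigr)$. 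Summing over $k$ and splitting the sum at $k\approx\log_2(1/|t|)$, each of the two resulting pieces contributes $\og(|t|^{1/p})$, so $\|f(\cdot+t)-f(\cdot)\|_{L^p}\leq C|t|^{1/p}$; that is, $\omega_p(\delta,f)=\og(\delta^{1/p})$, and therefore $f\in\Lambda^p_{1/p}$.

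The main obstacle is the dyadic block estimate $\|\Delta_k f\|_{H^p}\leq C\,2^{-k/p}$. A direct application of Hardy--Littlewood to $f'$ is unavailable, because the Taylor coefficients $na_n$ of $f'$ are not monotone; the argument must instead be organised around $f$ itself and its Littlewood--Paley blocks---which, once the factor $z^{2^k}$ is extracted, \emph{do} carry decreasing nonnegative coefficients---with Bernstein's inequality mediating the subsequent passage to derivatives.
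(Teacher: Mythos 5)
Your proof is correct, and it is worth comparing with the paper's route, which is deliberately terse: the paper derives this lemma as the special case $\om(\delta)=\delta^{1/p}$ of Lemma~\ref{ThDec}, whose proof consists of two citations --- Pavlovi\'c's estimate $\|\Delta_N f\|_{H^p}\asymp a_{2^N}2^{N(1-1/p)}$ for decreasing nonnegative coefficients, and the Girela--Gonz\'alez characterization $f\in\lao\Leftrightarrow\|\Delta_Nf\|_{H^p}=\og(\om(2^{-N}))$ --- together with the fact (quoted from \cite{GM2}) that a Bloch function with decreasing nonnegative coefficients has $a_n=\og(1/n)$. You reproduce the same dyadic skeleton but make every black box explicit: the radial estimate $(1-r)f'(r)\lesssim 1$ at $r=1-1/N$ restricted to the block $N\le n\le 2N$ replaces the quoted Bloch coefficient fact; the Hardy--Littlewood coefficient theorem applied to $g_k$ (with the harmless extraction of the factor $z^{2^k}$, which does not change the $H^p$ norm) replaces Pavlovi\'c's lemma; and the $\min(2^{-k/p},|t|2^{k(1-1/p)})$ summation via Bernstein's inequality replaces the Girela--Gonz\'alez theorem, in effect reproving the easy direction of that Littlewood--Paley characterization for the weight $\delta^{1/p}$. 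The trade-off is clear: the paper's proof is three lines long but rests on two nontrivial external results, while yours is self-contained modulo only the classical Hardy--Littlewood theorem and Bernstein's inequality, at the cost of not generalizing for free to arbitrary admissible weights $\om$ as Lemma~\ref{ThDec} does. One cosmetic point: your argument should record that $\sum_k\|\Delta_kf\|_{H^p}<\infty$ already places $f$ in $H^p$, so that the boundary function and hence $\omega_p(\delta,f)$ are defined before the modulus-of-continuity estimate is carried out.
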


Lemma\,\@\ref{ThDeclambdap} is a consequence of the following one
which will be proved in Section\,\@\ref{Lips}.

\begin{lemma}\label{ThDec}
Let $1<p<\infty$, $\om\in\aw$ and let $f(z)=\sum_{n=0}^\infty
a_nz^n$ with $\{ a_n\}_{n=0}^\infty $ being a decreasing sequence of
nonnegative numbers. Then \begin{equation}\label{dec-coef-omega}f\in
\lao\quad \Leftrightarrow\quad a_n\,=\,\og \left
(\frac{\om(1/n)}{n^{1-1/p}}\right ).\end{equation}
\end{lemma}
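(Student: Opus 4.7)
The plan is to use the Blasco–de Souza characterization recalled in the excerpt,
$$f\in\lao\iff M_p(r,f')=\og\!\left(\frac{\om(1-r)}{1-r}\right),$$
and translate each direction of (\ref{dec-coef-omega}) into a matching estimate on $M_p(r,f')$, exploiting the nonnegativity and monotonicity of $\{a_n\}$. Two standard consequences of $\om\in\aw$ will be used throughout: $\om$ is doubling, $\om(2t)\le C\om(t)$, and $\om(t)/t$ is almost decreasing; both follow from the Dini and $b_1$ conditions.

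For $(\Rightarrow)$, fix $N\ge 1$ and set $r=1-1/N$. Consider the polynomial $P_N(z)=\sum_{n=N}^{2N}na_nz^{n-1}$, which is the Fourier projection of $f'$ onto the frequencies $[N-1,2N-1]$. Since this projection is bounded on $L^p(\T)$ for $1<p<\infty$ (Riesz/Marcinkiewicz multiplier), $\|P_N(re^{i\cdot})\|_{L^p(\T)}\le CM_p(r,f')\le C\om(1/N)N$. Because $P_N$ has nonnegative Taylor coefficients, $\|P_N(re^{i\cdot})\|_{L^\infty(\T)}=P_N(r)$, and using $r^{2N}\asymp 1$, $a_n\ge a_{2N}$ on $[N,2N]$, and $\sum_{n=N}^{2N}n\asymp N^2$, one gets $P_N(r)\ge cN^2a_{2N}$. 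Nikol'skii's inequality applied to the trigonometric polynomial $P_N(re^{i\cdot})$ (of degree $<2N$) gives $\|P_N(re^{i\cdot})\|_{L^\infty}\le CN^{1/p}\|P_N(re^{i\cdot})\|_{L^p}$, and combining these bounds yields $a_{2N}\le C\om(1/N)/N^{1-1/p}$. The doubling of $\om$ and the monotonicity of $\{a_n\}$ promote this to $a_n=\og(\om(1/n)/n^{1-1/p})$ for every $n$.

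For $(\Leftarrow)$—the main obstacle—the natural strategy is a dyadic decomposition $f'=\sum_{k\ge 0}\Delta_k$ with $\Delta_k(z)=\sum_{2^k\le n<2^{k+1}}na_nz^{n-1}$. Within each block the coefficients $na_n$ are nonnegative and, using the hypothesis $a_n\le K\om(1/n)/n^{1-1/p}$ together with the doubling of $\om$, comparable within a bounded factor to a monotone sequence, so the Hardy–Littlewood theorem for $H^p$-functions with monotone nonnegative Taylor coefficients applies and gives the sharp estimate $M_p(r,\Delta_k)^p\asymp\sum_{n\in[2^k,2^{k+1})}(na_n)^p n^{p-2}r^{np}$. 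Substituting the coefficient bound, summing in $k$ after splitting at $2^k\sim 1/(1-r)$, and using the $b_1$ condition on the head and the geometric decay of $r^{2^k}$ on the tail, produces $M_p(r,f')\le C\om(1-r)/(1-r)$. The difficulty is that the crude bound $M_p(r,f')\le f'(r)$ loses an extra factor $(1-r)^{-1/p}$, so the non-trivial $L^p$ estimate for polynomials with almost-monotone coefficients on each dyadic block is essential. Finally, Lemma \ref{ThDeclambdap} follows as the special case $\om(t)=t^{1/p}\in\aw$ (for $p>1$), since then $\om(1/n)/n^{1-1/p}=1/n$.
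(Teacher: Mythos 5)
Your proof is correct, but it takes a genuinely different (and much longer) route than the paper. The paper's argument is two lines: it combines the Girela--Gonz\'alez characterization $f\in\lao\Leftrightarrow\|\Delta_Nf\|_{H^p}=\og\bigl(\om(2^{-N})\bigr)$ (Theorem B) with Lemma A of Pavlovi\'c, which says that for decreasing nonnegative coefficients $\|\Delta_Nf\|_{H^p}\asymp a_{2^N}2^{N(1-1/p)}$; the equivalence \eqref{dec-coef-omega} then follows by the routine passage from dyadic indices to all $n$ using monotonicity and the doubling of $\om$. You instead start from the Blasco--de Souza characterization via $M_p(r,f')$ and rebuild the dyadic machinery by hand: your necessity argument (Riesz projection onto the frequency block, positivity giving $\|P_N(re^{i\cdot})\|_\infty=P_N(r)$, and Nikol'skii's inequality) is in effect an elementary proof of the lower half of Pavlovi\'c's lemma, while your sufficiency argument (blockwise $L^p$ bound, then summation split at $2^k\sim 1/(1-r)$ with the $b_1$ condition on the head and the decay of $r^{2^k}$ on the tail) reproves the relevant half of Theorem B. What your version buys is self-containedness and an explicit record of where each hypothesis on $\om$ enters; what it costs, besides length, is one point needing care: the ``Hardy--Littlewood theorem for monotone nonnegative coefficients'' you invoke for $M_p(r,\Delta_k)^p\asymp\sum_n n^{p-2}(na_nr^n)^p$ is stated for coefficient sequences decreasing on all of $\{0,1,2,\dots\}$, not for a single dyadic block (whose coefficient sequence jumps up from $0$ at $n=2^k$), so in the range $1<p<2$ --- the only range where interpolation or Hausdorff--Young does not already yield the upper bound $2^{k(1-1/p)}\max_n(na_nr^n)$ without any monotonicity --- you are really appealing to the block version of that estimate, which is precisely Pavlovi\'c's Lemma A. With that reference supplied, everything you write goes through.
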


Using Lemma\,\@\ref{ThDeclambdap} and following the proof of
Theorem\,\@\ref{th-X} in \cite{GM2}, we obtain

\begin{theorem}\label{ext-p-may-1} Suppose that  $1<p<\infty$. Let $\mu $ be a positive Borel measure on $[0,1)$ and let $X$ be a
Banach space of analytic functions in $\mathbb D$ with
$\Lambda^p_{1/p}\subset X\subset \mathcal B$. Then the following
conditions are equivalent.
\begin{itemize}
\item[(i)] The operator $\mathcal H_\mu $ is well defined in $X$ and, furthermore, it is a bounded operator from $X$ into he Bloch space $\mathcal B$.
\item[(ii)] The operator $\mathcal H_\mu $ is well defined in $X$ and, furthermore, it is a bounded operator from $X$ into $\Lambda^p_{1/p}$.
\item[(iii)] The measure $\mu $ is a
$1$-logarithmic $1$-Carleson measure.
\item[(iv)] $\int_{[0,1)}t^n\log\frac{1}{1-t}d\mu (t)\,=\,\og \left
(\frac{1}{n}\right )$.
\end{itemize}
\end{theorem}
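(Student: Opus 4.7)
The plan is to mirror the proof of Theorem \ref{th-X} from \cite{GM2}, systematically replacing the role of $\Lambda^2_{1/2}$ by $\Lambda^p_{1/p}$. The crucial substitution is made possible by Lemma \ref{ThDeclambdap}, which provides the $p$-analogue of the coefficient characterization used for $p=2$. The implication (ii) $\Rightarrow$ (i) is immediate, because $\Lambda^p_{1/p} \subset BMOA \subset \mathcal{B}$ continuously. The equivalence (iii) $\Leftrightarrow$ (iv) is a standard fact for positive Borel measures on $[0,1)$: the $1$-logarithmic $1$-Carleson condition on $\mu([t,1))$ is equivalent, after a dyadic decomposition of $[0,1)$ and summation by parts, to the moment estimate in (iv).

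For (i) $\Rightarrow$ (iv) I would test the operator on $f(z) = \log \frac{1}{1-z} = \sum_{k \geq 1} z^k / k$. Its Taylor coefficients form a decreasing non-negative sequence with $1/k = \og(1/k)$, so Lemma \ref{ThDeclambdap} places $f$ in every space $X$ with $\Lambda^p_{1/p} \subset X \subset \mathcal{B}$. Applying $\mathcal{H}_\mu$ to $f$ and interchanging summation with integration via Fubini gives
$$\mathcal{H}_\mu(f)(z) = \sum_{n=0}^\infty \left( \int_{[0,1)} t^n \log \frac{1}{1-t}\, d\mu(t) \right) z^n,$$
a power series whose coefficients $c_n(\mu) = \int_{[0,1)} t^n \log(1/(1-t))\, d\mu(t)$ are positive and decrease in $n$. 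Hypothesis (i) places this function in $\mathcal{B}$, so Lemma \ref{ThDeclambdap} applied with $X = \mathcal{B}$ forces $c_n(\mu) = \og(1/n)$, which is precisely (iv).

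The main obstacle is the implication (iii) $\Rightarrow$ (ii). Assuming that $\mu$ is a $1$-logarithmic $1$-Carleson measure, one must show that for every $f \in X \subset \mathcal{B}$ the operator produces a function in $\Lambda^p_{1/p}$. The first task is to verify that the defining series converges to an analytic function in $\mathbb{D}$: the Bloch hypothesis on $f$ provides enough control on its Taylor coefficients, while (iv) provides the necessary decay of the moments of $\mu$. Once well-definedness is secured, I would invoke the Blasco--de Souza characterization $\Lambda^p_{1/p} = \{g : M_p(r, g') = \og((1-r)^{1/p - 1})\}$ recalled in the introduction, estimate $M_p(r, (\mathcal{H}_\mu f)')$ through a dyadic decomposition of its Taylor series, and use $L^p$-versions of the Hardy--Littlewood coefficient inequalities in place of the $L^2$ Plancherel-type arguments of \cite{GM2}. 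This parallels the $p=2$ proof step by step, but demands care in choosing estimates that remain valid for general $p \in (1, \infty)$.
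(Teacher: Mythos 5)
Your proposal takes essentially the same route as the paper, which proves this theorem simply by invoking Lemma \ref{ThDeclambdap} (the $p$-analogue of the decreasing-coefficient characterization) and transplanting the proof of Theorem \ref{th-X} from \cite{GM2} with $\Lambda^2_{1/2}$ replaced by $\Lambda^p_{1/p}$; your test function $\log\frac{1}{1-z}$ for (i)$\Rightarrow$(iv) and your outline of (iii)$\Rightarrow$(ii) match that template. The only cosmetic difference is that for (iii)$\Rightarrow$(ii) the natural general-$p$ estimate of $M_p(r,\mathcal H_\mu(f)')$ goes through Minkowski's integral inequality applied to the representation $I_\mu(f)'(z)=\int_{[0,1)}tf(t)(1-tz)^{-2}\,d\mu(t)$ (as in the paper's proof of Theorem \ref{thHlao}) rather than a dyadic/coefficient argument, but this is a detail of execution, not of strategy.
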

\par As an immediate consequence of Theorem\,\@\ref{ext-p-may-1} we
obtain the following result.
\begin{corollary} Let $\mu $ be a positive Borel measure on $[0,1)$
and $1<p<\infty $. Then the operator $\mathcal H_\mu $ is well
defined in $\Lambda^p_{1/p}$ and, furthermore, it is a bounded
operator from $\Lambda^p_{1/p}$ into itself if and only if $\mu $ is
a $1$-logarithmic $1$-Carleson measure.
\end{corollary}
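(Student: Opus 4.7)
The plan is to deduce the corollary as a direct specialization of Theorem~\ref{ext-p-may-1}, applied with $X=\Lambda^p_{1/p}$. Since the corollary's statement matches exactly the equivalence of condition (ii) and condition (iii) of that theorem for this particular choice of $X$, the only task is to verify that $X=\Lambda^p_{1/p}$ meets the abstract hypotheses imposed on the space $X$ in Theorem~\ref{ext-p-may-1}.

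First I would check that $\Lambda^p_{1/p}$ is a Banach space of analytic functions in $\D$. This is standard: using the Hardy--Littlewood characterization recorded in \eqref{lamba-p-alpha}, one can endow $\Lambda^p_{1/p}$ with the norm
\[
\|f\|_{\Lambda^p_{1/p}} \;=\; \|f\|_{H^p} \;+\; \sup_{0<r<1}(1-r)^{1-1/p}\,M_p(r,f'),
\]
under which it is complete, and point evaluations are continuous. Next I would verify the sandwich inclusion $\Lambda^p_{1/p}\subset X\subset \mathcal{B}$ required by Theorem~\ref{ext-p-may-1}. The left-hand inclusion is trivial since $X=\Lambda^p_{1/p}$. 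For the right-hand inclusion I would invoke the chain $\Lambda^p_{1/p}\subset BMOA\subset\mathcal{B}$ already recorded in the introduction (the first inclusion being the Hardy--Littlewood/Bourdon--Shapiro--Sledd theorem cited after \eqref{lamba-p-alpha}, the second being the classical fact that $BMOA\subsetneq \mathcal{B}$).

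With these hypotheses confirmed, Theorem~\ref{ext-p-may-1} applies, and the equivalence of conditions (ii) and (iii) in that theorem states precisely that $\mathcal{H}_\mu$ is well defined on $\Lambda^p_{1/p}$ and maps $\Lambda^p_{1/p}$ boundedly into itself if and only if $\mu$ is a $1$-logarithmic $1$-Carleson measure. There is no genuine obstacle here, since the whole difficulty has been absorbed into Theorem~\ref{ext-p-may-1}; the only conceptual point worth emphasizing in the write-up is that $\Lambda^p_{1/p}$ is simultaneously the smallest and one of the admissible intermediate spaces to which the abstract result applies, so the characterization transfers verbatim.
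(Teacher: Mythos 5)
Your proposal is correct and is exactly the paper's intended argument: the corollary is stated there as an immediate consequence of Theorem~\ref{ext-p-may-1} applied with $X=\Lambda^p_{1/p}$, using the inclusions $\Lambda^p_{1/p}\subset BMOA\subset\mathcal{B}$ recorded in the introduction. Your verification of the hypotheses is sound and nothing further is needed.
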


\par\medskip Let us turn our attention now to the spaces $\lao $
with $\frac{\om(\delta)}{\delta^{1/p}}\nearrow\infty$,
$\delta\searrow 0$ which, as noted before, are not included in the
Bloch space. We have the following result which shows that the
situation is different from the one covered in
Theorem\,\@\ref{ext-p-may-1}.
\begin{theorem}\label{thHlao}
Let $1<p<\infty$, $\om\in\aw$ with $\frac{\om(\delta)}{\delta^{1/p}}\nearrow \infty$ when $\delta\searrow 0$. The following conditions are equivalent:
\begin{itemize}
\item[(i)] The operator $\hu$ is well defined in $\lao$ and, furthermore, it is a bounded operator from $\lao$ into itself.
\item[(ii)] The measure $\mu$ is a Carleson measure.
\end{itemize}
\end{theorem}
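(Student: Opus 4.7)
The plan is to prove the two implications separately, using throughout the Blasco--de Souza characterization of $\lao$: $f\in\lao$ if and only if $M_p(r,f')=\og(\omega(1-r)/(1-r))$.

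For the implication $(ii)\Rightarrow(i)$, assume $\mu$ is a Carleson measure and let $f\in\lao$. First I would establish the integral representation
$$\hu(f)(z)=\int_{[0,1)}\frac{f(t)}{1-tz}\,d\mu(t),\qquad z\in\D,$$
by interchanging summation and integration; this is legitimate because $f\in H^p$ and $\mu$ is finite, and the resulting integral converges since $\int(1-t)^{-1/p}\,d\mu(t)<\infty$ for $\mu$ Carleson and $p>1$. Differentiating and applying Minkowski's integral inequality together with the standard estimate $M_p(r,(1-tz)^{-2})\le C(1-rt)^{-2+1/p}$ yields
$$M_p(r,\hu(f)')\le C\int_{[0,1)}\frac{t\,\abs{f(t)}}{(1-rt)^{2-1/p}}\,d\mu(t).$$
For the pointwise control of $\abs{f(t)}$, the Hardy--Littlewood $H^p\to H^\infty$ estimate applied to $f'$ gives $\abs{f'(z)}\le C\omega(1-\abs{z})/(1-\abs{z})^{1+1/p}$; integrating radially yields $\abs{f(t)}\le\abs{f(0)}+C\int_{1-t}^1\omega(s)/s^{1+1/p}\,ds$. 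Splitting accordingly, the constant part is handled by the classical Carleson-type estimate $\int d\mu(t)/(1-rt)^{2-1/p}\le C(1-r)^{-1+1/p}$, which is dominated by $\omega(1-r)/(1-r)$ because the monotonicity hypothesis $\omega(\delta)/\delta^{1/p}\nearrow\infty$ forces $\omega(1-r)\gtrsim(1-r)^{1/p}$ for $r$ near $1$. The tail part is controlled by Fubini's theorem, a dyadic decomposition of $s\in(0,1)$ using $\mu([1-s,1))\le Cs$, and the condition $\omega\in b_1$, producing again the bound $\og(\omega(1-r)/(1-r))$; by the Blasco--de Souza characterization this proves $\hu(f)\in\lao$.

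For the implication $(i)\Rightarrow(ii)$, I would test boundedness on the function
$$f(z)=\sum_{n=1}^\infty\frac{\omega(1/n)}{n^{1-1/p}}z^n,$$
whose Taylor coefficients form a decreasing sequence of nonnegative numbers and which lies in $\lao$ by Lemma~\ref{ThDec}. The $n$-th Taylor coefficient of $\hu(f)$ is
$$b_n=\sum_{k=1}^\infty\mu_{n+k}\frac{\omega(1/k)}{k^{1-1/p}}=\int_{[0,1)}t^n f(t)\,d\mu(t),$$
and $\{b_n\}$ is itself a decreasing sequence of nonnegative numbers because $t^n(1-t)f(t)\ge 0$ on $[0,1)$. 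Hence, assuming $\hu(f)\in\lao$, Lemma~\ref{ThDec} forces $b_n=\og(\omega(1/n)/n^{1-1/p})$. On the other hand, keeping only the indices $n/2\le k\le n$ in the definition of $f$ and using that for such $k$ both $\omega(1/k)\ge\omega(1/n)$ and $k^{1-1/p}\le n^{1-1/p}$, one obtains the lower bound $f(1-1/n)\gtrsim\omega(1/n)\,n^{1/p}$; combining with the fact that $f$ is increasing on $[0,1)$ and $(1-1/n)^n\ge e^{-1}$,
$$b_n\ge(1-1/n)^n f(1-1/n)\,\mu([1-1/n,1))\gtrsim\omega(1/n)\,n^{1/p}\,\mu([1-1/n,1)).$$
Comparing the two estimates delivers $\mu([1-1/n,1))\le C/n$ for all $n\in\N$, which is precisely the Carleson condition on $[0,1)$.

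The principal technical obstacle lies in the direction $(ii)\Rightarrow(i)$, where one must show that integrating the admissible-weight tail $\int_{1-t}^1\omega(s)/s^{1+1/p}\,ds$ against the kernel $1/(1-rt)^{2-1/p}$, weighted by a Carleson measure, produces exactly the sharp growth $\omega(1-r)/(1-r)$. This is the step where the admissibility of $\omega$ (in particular the $b_1$ and Dini conditions) and the monotonicity hypothesis $\omega(\delta)/\delta^{1/p}\nearrow\infty$ are all essential, ensuring that the dyadic sums do not pick up extraneous logarithmic factors.
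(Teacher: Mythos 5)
Your implication (i)$\Rightarrow$(ii) is correct and essentially coincides with the paper's argument: same test function $f(z)=\sum \om(1/n)n^{-(1-1/p)}z^n$, same double use of Lemma~\ref{ThDec}. Your direct lower bound $b_n\ge (1-1/n)^n f(1-1/n)\,\mu([1-1/n,1))$ together with $f(1-1/n)\gtrsim \om(1/n)n^{1/p}$ is a streamlined version of the paper's detour through Lemma~\ref{LemMom} and its estimate of $\sum_k \om(1/k)k^{-(1-1/p)}b^k$; both yield $\mu([1-1/n,1))\lesssim 1/n$. No issue there.

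The gap is in (ii)$\Rightarrow$(i), exactly at the step you flag as the principal obstacle. Integrating the derivative bound $|f'(z)|\lesssim \om(1-|z|)(1-|z|)^{-1-1/p}$ radially gives only
$$|f(t)|\;\lesssim\; 1+\int_{1-t}^1\frac{\om(s)}{s^{1+1/p}}\,ds,$$
and your Fubini/dyadic computation then needs
$$\int_\delta^1\frac{\om(s)}{s^{1+1/p}}\,ds\;\lesssim\;\frac{\om(\delta)}{\delta^{1/p}}$$
in order to land on $\og\left(\om(1-r)/(1-r)\right)$. This is a $b_{1/p}$-type condition, and it does \emph{not} follow from $\om\in\aw$ (Dini plus $b_1$) together with $\om(\delta)\delta^{-1/p}\nearrow\infty$: take $\om(t)=t^{1/p}\left(A+\log(1/t)\right)$ with $A$ large enough that $\om$ is increasing on $[0,\pi]$. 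This weight is admissible and satisfies the monotonicity hypothesis, yet the left-hand side above is $\asymp\log^2(1/\delta)$ while the right-hand side is $\asymp\log(1/\delta)$. The loss is not an artifact of your method being crude: for this weight the function $g(z)=\sum_{n\ge 2}(\log n/n)z^n$ belongs to $\lao$ by Lemma~\ref{ThDec} and satisfies $g(r)\asymp\log^2\frac{1}{1-r}$, so no pointwise bound better than yours can be extracted from the derivative growth alone, and the extra logarithm survives every subsequent integration, leaving you with $\og\left(\om(1-r)\log\frac{1}{1-r}/(1-r)\right)$ instead of the required rate. The paper closes this step by a different route: it invokes the sharp pointwise estimate $|f(z)|\lesssim \om(1-|z|)(1-|z|)^{-1/p}$ directly from Lemma 3 of \cite{GG} and never passes through $\int_{1-t}^1\om(s)s^{-1-1/p}\,ds$. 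To repair your proof you must either import that lemma as a black box, as the paper does, or impose the $b_{1/p}$-type condition on $\om$ explicitly. (The example $g$ above is in fact worth checking against the paper's own appeal to \cite{GG}, since $g(r)\asymp\log^2\frac{1}{1-r}$ exceeds $\om(1-r)(1-r)^{-1/p}\asymp\log\frac{1}{1-r}$.)
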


\par The proofs of Lemma\,\@\ref{ThDec}
and Theorem\,\@\ref{thHlao} will be presented in
Section\,\@\ref{Lips}.
 We close this section
noticing that, as usual, we shall be using the convention that
$C=C(p, \alpha ,q,\beta , \dots )$ will denote a positive constant
which depends only upon the displayed parameters $p, \alpha , q,
\beta \dots $ (which sometimes will be omitted) but not  necessarily
the same at different occurrences. Moreover, for two real-valued
functions $E_1, E_2$ we write $E_1\lesssim E_2$, or $E_1\gtrsim
E_2$, if there exists a positive constant $C$ independent of the
arguments such that $E_1\leq C E_2$, respectively $E_1\ge C E_2$. If
we have $E_1\lesssim E_2$ and  $E_1\gtrsim E_2$ simultaneously then
we say that $E_1$ and $E_2$ are equivalent and we write $E_1\asymp
E_2$.

\section{Proofs of the main results}\label{Lips}
We start recalling that for a function $f(z)=\sum_{n=0}^\infty
a_nz^n$ analytic in $\D,$ the polynomials $\Delta_jf$ are defined as
follows:
\[
\Delta_jf(z)=\sum_{k=2^j}^{2^{j+1}-1} a_kz^k, \quad\text{for $j\ge
1$},
\]
$$\Delta_0f(z)=a_0+a_1z.$$ The proof of Lemma\,\@\ref{ThDec} is based
in the following result of Girela and Gonz\'{a}lez
\cite[Theorem\,\@2]{GG}.

\begin{other}[]\label{ThGG}
Let $1<p<\infty$ and let $\om$ be an admissible weight. If
$f\in\hol(\D)$ with $f(z)=\sum_{n=0}^\infty a_nz^n$ then
$$f\in\lao \,\,\Leftrightarrow \,\,\|\Delta_N f\|_{H^p}=O\left(\om \left(\frac{1}{2^N}\right)\right).$$
\end{other}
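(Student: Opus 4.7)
The plan is to treat both directions via the integral representation
$$\hu(f)(z)=\int_{[0,1)}\frac{f(t)}{1-tz}\,d\mu(t),$$
obtained by interchanging summation and integration in the series definition of $\hu(f)$ (valid once $f$ is shown to be $\mu$-integrable). The key auxiliary facts are: (a) the Blasco--de Souza characterization $f\in\lao\iff M_p(r,f')=O(\om(1-r)/(1-r))$, (b) Lemma \ref{ThDec} (which gives a sharp coefficient characterization for functions with decreasing nonnegative coefficients), and (c) a pointwise growth estimate $|f(t)|\lesssim\|f\|_{\lao}\om(1-t)/(1-t)^{1/p}$ that exploits the "polynomial gap" between $\om$ and $\delta^{1/p}$.

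For (ii)$\Rightarrow$(i), assume $\mu$ is a Carleson measure. The Dini condition yields $\lao\subset H^p$, so Carleson's embedding makes $f$ $\mu$-integrable and $\hu(f)$ well defined. Differentiating under the integral, applying Minkowski's integral inequality, and using the standard identity $M_p(r,(1-tz)^{-2})\asymp(1-rt)^{-(2-1/p)}$, one arrives at
$$M_p(r,\hu(f)')\lesssim \int_{[0,1)}\frac{|f(t)|\,d\mu(t)}{(1-rt)^{2-1/p}}.$$
Substituting the pointwise bound $|f(t)|\lesssim \|f\|_{\lao}\om(1-t)/(1-t)^{1/p}$, changing variable to $u=1-t$ so that $1-rt\asymp(1-r)+u$, and splitting at $u=1-r$ produces two pieces. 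The inner piece ($u\le 1-r$) is controlled by integration by parts against the Carleson bound $\mu([1-u,1))\le Cu$ together with the Dini condition on $\om$; the outer piece ($u>1-r$) reduces to $\int_{1-r}^{1}\om(u)/u^2\,du\lesssim \om(1-r)/(1-r)$, which is exactly the $b_1$ condition. One concludes $M_p(r,\hu(f)')\lesssim \|f\|_{\lao}\om(1-r)/(1-r)$, and Blasco--de Souza yields $\hu(f)\in\lao$.

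For (i)$\Rightarrow$(ii), the plan is to avoid pointwise evaluations entirely by using the coefficient criterion of Lemma \ref{ThDec}. Consider the test family
$$g_b(z)=\frac{\om(1-b)(1-b)^{1-1/p}}{1-bz},\qquad b\in(0,1).$$
A direct computation with $M_p(r,(1-bz)^{-2})\asymp(1-rb)^{-(2-1/p)}$ shows $\|g_b\|_{\lao}\asymp 1$ uniformly in $b$. Since $g_b$ has nonnegative decreasing Taylor coefficients and $\mu_n\ge 0$, so does $\hu(g_b)$; explicitly,
$$c_n=\om(1-b)(1-b)^{1-1/p}\int_{[0,1)}\frac{t^n\,d\mu(t)}{1-bt}.$$
Lemma \ref{ThDec} then gives $c_n\lesssim \|\hu\|_{\lao\to\lao}\,\om(1/n)/n^{1-1/p}$. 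Taking $n=\lfloor 1/(1-b)\rfloor$ (so $\om(1/n)\asymp\om(1-b)$ and $n^{1-1/p}\asymp(1-b)^{-(1-1/p)}$), and bounding the integral from below by restricting to $t\in[b,1)$ (where $t^n\ge e^{-1}$ and $1-bt\le 1-b^2\lesssim 1-b$), one obtains $c_n\gtrsim \om(1-b)(1-b)^{-1/p}\mu([b,1))$. Comparing the two estimates and cancelling the common factor $\om(1-b)/(1-b)^{1/p}$ yields $\mu([b,1))\lesssim(1-b)$, i.e., $\mu$ is a Carleson measure.

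The main obstacle is the pointwise estimate used in (ii)$\Rightarrow$(i). Starting from $|f(re^{i\theta})|\le|f(0)|+\int_0^r M_\infty(s,f')\,ds$ and applying Hardy--Littlewood to pass from $M_p(s,f')\lesssim \om(1-s)/(1-s)$ to $M_\infty(s,f')\lesssim\om(1-s)/(1-s)^{1+1/p}$, one reduces the pointwise bound to the "$b_{1/p}$"-type estimate
$$\int_\delta^1\frac{\om(u)}{u^{1+1/p}}\,du\lesssim \frac{\om(\delta)}{\delta^{1/p}}.$$
This does not follow from the $b_1$ condition alone; it is precisely at this point that the hypothesis $\om(\delta)/\delta^{1/p}\nearrow\infty$ enters, ensuring that $g(u):=\om(u)/u^{1/p}$ is decreasing and — via the admissibility of $\om$ and the forced bound $\om(2\delta)/\om(\delta)\le 2^{1/p}$ — decreases rapidly enough to justify the integral estimate. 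Once this pointwise bound is secured, both implications reduce to routine Forelli--Rudin-type Carleson integral estimates.
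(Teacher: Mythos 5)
Your proposal does not address the statement at hand. Theorem~\ref{ThGG} is the dyadic-block characterization of $\lao$: for $1<p<\infty$ and $\om$ admissible, $f\in\lao$ if and only if $\|\Delta_N f\|_{H^p}=O\left(\om\left(1/2^N\right)\right)$. This is a statement purely about the space $\lao$ and the Taylor blocks $\Delta_N f$ of a single analytic function; no measure $\mu$, no operator $\hu$, and no Carleson condition occurs in it. What you have written is instead a proof sketch of Theorem~\ref{thHlao} (the equivalence between boundedness of $\hu$ on $\lao$ and $\mu$ being a Carleson measure). Nothing in your text bears on the equivalence between membership in $\lao$ and the decay of the $H^p$-norms of the dyadic blocks, so as a proof of Theorem~\ref{ThGG} the proposal is vacuous. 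The mismatch is compounded by a circularity: your argument leans on Lemma~\ref{ThDec} (the coefficient criterion for functions with decreasing nonnegative coefficients), and the paper's proof of that lemma is itself an application of Theorem~\ref{ThGG} together with Pavlovi\'c's estimate $\|\Delta_N f\|_{H^p}\asymp a_{2^N}2^{N(1-1/p)}$; you cannot invoke that machinery while the block characterization is the thing being established.

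For calibration: the paper does not prove Theorem~\ref{ThGG} at all; it is quoted as Theorem~2 of Girela and Gonz\'alez \cite{GG}. A genuine proof would go through the Blasco--de Souza description $f\in\lao\Leftrightarrow M_p(r,f')=O\left(\om(1-r)/(1-r)\right)$ and then compare $M_p(r,f')$ with $\sum_N 2^N\|\Delta_N f\|_{H^p}r^{2^N}$ in both directions, using the Dini condition to control the sum over small blocks and the $b_1$ condition to control the tail; none of that appears in your write-up. You should discard the Carleson-measure material here and produce an argument about $\Delta_N f$ directly.
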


\begin{proof}[Proof of Lemma \ref{ThDec}]
By Lemma A of \cite{Pav-dec}, since $a_n\searrow 0$, we have
$$\|\Delta_N f\|_{H^p}\asymp a_{2^N} 2^{N(1-1/p)},\quad N\ge 1.$$
So by Theorem \ref{ThGG} we have that
$$ f\in\lao\Leftrightarrow a_{2^N} \lesssim \frac{\om\left(1/2^N\right)}{2^{N(1-1/p)}},\quad N\ge 1. $$
This easily implies (\ref{dec-coef-omega}).
\end{proof}

\begin{lemma}\label{LemMom} Suppose that $1<p<\infty $.
Let $\nu$ be a positive Borel measure on $[0,1)$, and let
$\om\in\aw$ satisfying that $x^{-1/p}\om(x)\nearrow\infty$, as
$x\searrow 0$. Then following conditions are equivalent:
\begin{itemize}\item[(i)]
$\nu_n\lesssim \frac{\om(1/n)}{n^{{1-1/p}}},\, n\ge2$.
\item[(ii)] $\nu([b,1])\lesssim (1-b)^{1-1/p}
\om(1-b),\, b\in[0,1).$
\end{itemize}
\end{lemma}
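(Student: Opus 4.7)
The plan is to establish the two implications separately.

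For $(i)\Rightarrow(ii)$, I would use the standard moment-to-tail comparison. Given $b$ sufficiently close to $1$, choose $n=\lceil 1/(1-b)\rceil\ge 2$, and observe that $(t/b)^n\ge 1$ on $[b,1]$, so
\[
\nu([b,1])\le b^{-n}\int_0^1 t^n\,d\nu(t)=b^{-n}\nu_n.
\]
Since $n(1-b)$ is bounded, $b^{-n}$ is uniformly bounded. Combined with $1/n\asymp 1-b$ and the doubling relation $\om(1/n)\asymp\om(1-b)$ (a standard consequence of the $b_1$ condition for admissible weights), the hypothesis $\nu_n\lesssim\om(1/n)/n^{1-1/p}$ delivers $(ii)$. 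The range $b\in[0,1/2]$ is trivial since $\nu$ is finite while the right-hand side of $(ii)$ is bounded below.

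The harder direction $(ii)\Rightarrow(i)$ starts from the integration-by-parts identity $\nu_n=n\int_0^1 t^{n-1}\nu([t,1])\,dt$. Applying $(ii)$ and substituting $s=1-t$ gives
\[
\nu_n\lesssim n\int_0^1(1-s)^{n-1}s^{1-1/p}\om(s)\,ds.
\]
I would split the integral at $s=1/n$. The piece over $[0,1/n]$ is immediate from $(1-s)^{n-1}\le 1$ together with monotonicity of $\om$, yielding a bound of order $\om(1/n)/n^{1-1/p}$. For the tail over $[1/n,1]$ I would decompose into the dyadic rings $A_j=[2^j/n,2^{j+1}/n]$, $j=0,1,\dots,\lfloor\log_2 n\rfloor$, and use the pointwise estimate $(1-s)^{n-1}\le e^{-(n-1)s}\le e^{-2^{j-1}}$ on $A_j$.

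The main obstacle is controlling $\om(s)$ on these rings, because $\om(2^{j+1}/n)$ can be much larger than $\om(1/n)$. The key input is the standard consequence of the $b_1$ condition that $\om(\lambda\delta)\lesssim\lambda\,\om(\delta)$ for all $\lambda\ge 1$ with $\lambda\delta\le\pi$, which one obtains by lower-bounding $\int_{\lambda\delta/2}^{\lambda\delta}\om(t)/t^2\,dt$ by $\om(\lambda\delta/2)/(\lambda\delta)$ and invoking $b_1$. This yields $\om(2^{j+1}/n)\lesssim 2^j\om(1/n)$. Combining this with $s^{1-1/p}\le(2^{j+1}/n)^{1-1/p}$, the length $|A_j|\asymp 2^j/n$, and the factor $e^{-2^{j-1}}$, the $j$-th ring contributes at most a constant multiple of $e^{-2^{j-1}}\,2^{j(3-1/p)}\,\om(1/n)/n^{2-1/p}$ to the integral. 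Since $\sum_j e^{-2^{j-1}}2^{j(3-1/p)}<\infty$, multiplying by the prefactor $n$ produces the desired bound $\nu_n\lesssim\om(1/n)/n^{1-1/p}$.
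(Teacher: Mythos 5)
Your argument is correct, and both implications follow the same skeleton as the paper's proof: a Chebyshev-type restriction of the moment integral for $(i)\Rightarrow(ii)$ (the paper restricts $\int t^n\,d\nu$ to $[1-1/n,1)$ and uses $\inf_m(1-1/m)^m>0$, which is the same device as your $b^{-n}\nu_n$ bound with $n=\lceil 1/(1-b)\rceil$), and the identity $\nu_n=n\int_0^1\nu([t,1))t^{n-1}\,dt$ followed by a split at $1-1/n$ for $(ii)\Rightarrow(i)$. Where you genuinely diverge is in estimating the two pieces of that last integral. The paper exploits the hypotheses on $\om$ pointwise: on $[0,1-1/n]$ it uses that $s^{-1/p}\om(s)$ is decreasing in $s$ (so $(1-t)^{1-1/p}\om(1-t)\le n^{1/p}\om(1/n)(1-t)$ there), and on $[1-1/n,1]$ it uses that $s^{1-1/p}\om(s)$ increases to bound the integrand by its value at $s=1/n$; no decomposition into rings and no use of the $b_1$ condition is needed, and the exponential decay of $(1-s)^{n-1}$ is never invoked. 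Your dyadic decomposition with $(1-s)^{n-1}\le e^{-2^{j-1}}$ on $A_j$ and the quasi-linear growth $\om(\lambda\delta)\lesssim\lambda\,\om(\delta)$ extracted from $b_1$ is heavier machinery for the same tail, but it buys something: your proof of $(ii)\Rightarrow(i)$ (and in fact of $(i)\Rightarrow(ii)$, where monotonicity of $\om$ alone already gives $\om(1/n)/n^{1-1/p}\le\om(1-b)(1-b)^{1-1/p}$) never uses the hypothesis that $x^{-1/p}\om(x)\nearrow\infty$, so your version of the lemma is valid for an arbitrary admissible weight. Note also that you could replace the $b_1$-derived bound by the sharper $\om(\lambda\delta)\le\lambda^{1/p}\om(\delta)$, which is immediate from the monotonicity hypothesis the lemma does assume.
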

\begin{proof}
Suppose (i). Then we have that
\begin{align*}
1\gtrsim& \frac{n^{1-1/p}\,\nu_n}{\om(1/n)}=\frac{n^{1-1/p}}{\om(1/n)}\int_{[0,1)} t^n\,d\nu(t)\ge \frac{n^{1-1/p}}{\om(1/n)}\int_{[1-1/n,1)} t^n\,d\nu(t)
\\ \ge& \frac{n^{1-1/p}}{\om(1/n)}\nu([1-1/n,1))\,\left(1-\frac{1}{n}\right)^n
\\ \ge& \frac{n^{1-1/p}}{\om(1/n)}\nu([1-1/n,1))\,\inf\limits_{m\ge 2}\left(1-\frac{1}{m}\right)^m
\\ \gtrsim& \frac{n^{1-1/p}}{\om(1/n)}\nu([1-1/n,1)).
\end{align*}
So $\nu([1-1/n,1))\lesssim \frac{\om(1/n)}{n^{1-1/p}}$ for $n\ge2$.
\par Let now $b\in[1/2,1)$. There exists $n\ge 2$ such that $1-\frac{1}{n}\le b<1-\frac{1}{n+1}$ so using the above we have that
$$\nu([b,1))\le \nu([1-1/n,1))\lesssim \frac{\om(1/n)}{n^{1-1/p}}.$$
This, and the facts that $\om(1/n)n^{1/p}\le\om(1/(n+1))(n+1)^{1/p}$
and that the weight $\om$ increases  give (ii).
\par Suppose now (ii). Then
\begin{align*}
\nu_n&=\int_{[0,1)} t^n\,d\nu(t)=n\int_0^1 \nu([t,1))t^{n-1}\,dt
\\ &\lesssim n\int_0^1 (1-t)^{1-1/p} \om(1-t)t^{n-1}\,dt
\\ &= n \int_0^{1-\frac{1}{n}}\,+\,\int_{1-\frac{1}{n}}^1\left((1-t)^{1-1/p} \om(1-t)t^{n-1}\,dt\right).
\end{align*}
The first integral can be estimated bearing in mind that
$(1-t)^{-1/p}\om(1-t)\nearrow\infty$ when $t\nearrow 1$ as follows
\begin{align*}
&n \int_0^{1-\frac{1}{n}} (1-t)^{1-1/p} \om(1-t)t^{n-1}\,dt
\\ \le& n^{1+1/p}\om(1/n)\int_0^{1-\frac{1}{n}} (1-t)t^{n-1}\,dt
\\ =& n^{1+1/p}\om(1/n)\left(1-\frac{1}{n}\right)^n\left(\frac{1}{n} -\frac{n-1}{n(n+1)}\right)
\\ \lesssim&\frac{\om(1/n)}{n^{1-1/p}}.
\end{align*}
To estimate of the second integral we use that
$(1-t)^{1-1/p}\om(1-t)\searrow0$ when $t\nearrow 1$ to obtain
\begin{align*}
&n \int_{1-\frac{1}{n}}^1 (1-t)^{1-1/p} \om(1-t)t^{n-1}\,dt
\\ \le &n^{1/p}\om(1/n)\int_{1-\frac{1}{n}}^1 t^{n-1}\,dt
\\ =&\frac{\om(1/n)}{n^{1-1/p}}\left(1-\left(1-\frac{1}{n}\right)^n\right)
\\ \lesssim &\frac{\om(1/n)}{n^{1-1/p}}.
\end{align*}
Then (i) follows.
\end{proof}

\begin{proof}[Proof of Theorem \ref{thHlao}]
(i) $\Rightarrow$ (ii) Suppose that $\hu:\lao\to\lao$ is bounded. By
Lemma\,\@\ref{ThDec}  we have that the function $f$ defined by
$f(z)=\sum_{n=1}^\infty \frac{\om(1/n)}{n^{1-1/p}}z^n$ belongs to
the space $\lao$ so, by the hypothesis, $\Hu (f)$ belongs also to
$\lao$. Now
$$\Hu(f)(z)=\sum_{n=0}^\infty\left(\sum_{k=1}^\infty \frac{\om(1/k)}{k^{1-1/p}}\,\mu_{n+k} \right)z^n.$$
Notice that $\sum_{k=1}^\infty \frac{\om(1/k)}{k^{1-1/p}}\,\mu_{n+k}\searrow 0,\, n\to\infty$, so using again Lemma \ref{ThDec} it holds that
$$\sum_{k=1}^\infty \frac{\om(1/k)}{k^{1-1/p}}\,\mu_{n+k}=\int_{[0,1)} t^n\sum_{k=1}^\infty \frac{\om(1/k)}{k^{1-1/p}}\,t^k\,d\mu(t)
\lesssim \frac{\om(1/n)}{n^{1-1/p}}, $$
that is, the moments of the measure $\nu$ defined by
$$d\nu(t)=\sum_{k=1}^\infty \frac{\om(1/k)}{k^{1-1/p}}\,t^k\,d\mu(t)$$
satisfy that $$\nu_n\lesssim \frac{\om(1/n)}{n^{1-1/p}}, $$
so by Lemma \ref{LemMom} we have that $\nu([b,1))\lesssim (1-b)^{1-1/p} \om(1-b)$, $b\in[0,1)$.
\par According to the definition of the measure
\begin{align*}
(1-b)^{1-1/p} \om(1-b)&\gtrsim \nu([b,1))=\int_{[b,1)}d\nu(t)
\\ &=\int_{[b,1)}\sum_{k=1}^\infty \frac{\om(1/k)}{k^{1-1/p}}\,t^k\,d\mu(t)
\\ &\ge \mu\left([b,1)\right)\sum_{k=1}^\infty \frac{\om(1/k)}{k^{1-1/p}}\,b^k
\end{align*}
and the sum can be estimated as follows
\begin{align*}
\sum_{k=1}^\infty \frac{\om(1/k)}{k^{1-1/p}}\,b^k &\asymp \int_1^\infty \frac{\om(1/x)}{x^{1-1/p}}\,b^x\,dx
\\ &\ge \int_1^{\frac{1}{1-b}} \frac{\om(1/x)}{x^{1-1/p}}\,b^x\,dx
\\ &\ge (1-b)^{1-1/p}\om(1-b)b^{\frac{1}{1-b}}\left(\frac{1}{1-b}-1\right)
\\ &\gtrsim \frac{\om(1-b)}{(1-b)^{1/p}}.
\end{align*}
Finally, putting all together we have that
$$\mu([b,1))\lesssim 1-b$$ so $\mu$ is a Carleson measure.

\par (ii) $\Rightarrow$ (i)
To prove this implication we need to use the integral operator
$I_\mu $ considered in \cite{Ch-Gi-Pe, Ga-Pe2010, GM1, GM2} which is
closely related to the operator $\mathcal H_\mu$.
\par If $\mu $ is a positive Borel measure on $[0,1)$ and $f\in \hol (\D )$,
we shall write throughout the paper
\begin{equation*}I_\mu
(f)(z)=\int_{[0,1)}\frac{f(t)}{1-tz}\,d\mu (t),\end{equation*}
whenever the right hand side makes sense and defines an analytic
function in $\D $. It turns out that the operators $H_\mu $ and
$I_\mu$ are closely related. Indeed, as shown in the just mentioned
papers, it turns out that if $f$ is good enough $H_\mu (f)$ and
$I_\mu (f)$ are well defined and coincide.
\par
Suppose that $\mu$ is a Carleson measure supported on $[0, 1)$ and
let $f\in\lao$. We claim that
\begin{equation}\label{eqIuFin}
\int_{[0,1)}\frac{|f(t)|}{|1-tz|}\,d\mu(t)<\infty .
\end{equation}

Indeed, using Lemma 3 of \cite{GG} we have that

\begin{equation}\label{eqGrowth}
f\in\lao \Rightarrow|f(z)|\lesssim \frac{\om(1-|z|)}{(1-|z|)^{1/p}},\quad z\in\D.
\end{equation}
Then we obtain
\begin{align*}
\int_{[0,1)}\frac{|f(t)|}{|1-tz|}\,d\mu(t)&\le \frac{1}{1-|z|}\int_{[0,1)}|f(t)|\,d\mu(t)
\\ &\lesssim \frac{1}{1-|z|}\int_{[0,1)} \frac{\om(1-t)}{(1-t)^{1/p}}\,d\mu(t).
\end{align*}
If we choose $r\in[0,1)$ we can split the integral in the intervals $[0,r)$ and $[r,1)$. In the first one, as $\om$ is an increasing weight we have
\begin{align*}
\int_{[0,r)} \frac{\om(1-t)}{(1-t)^{1/p}}\,d\mu(t)&\le \om(1)\int_{[0,r)} \frac{d\mu(t)}{(1-t)^{1/p}}
\\ &\le  \om(1)\int_{[0,1)} \frac{d\mu(t)}{(1-t)^{1/p}}
\\ &\lesssim 1,
\end{align*}
because $\mu$ is a Carleson measure. Using this and the condition
$\frac{\om(\delta)}{\delta^{1/p}}\nearrow\infty$, as $\delta\searrow
0$ we can estimate the other integral as follows
\begin{align*}
\int_{[r,1)} \frac{\om(1-t)}{(1-t)^{1/p}}\,d\mu(t)&\le \frac{\om(1-r)}{(1-r)^{1/p}} \int_{[r,1)} d\mu(t)
\\ &\lesssim \om(1-r)(1-r)^{1-1/p}
\\ &\lesssim 1.
\end{align*}
So we have that for $f\in\lao$ and $z\in\D$, (\ref{eqIuFin}) holds.
This implies that $I_\mu (f)$ is well defined, and, using Fubini's
theorem and standard arguments it follows easily that $\mathcal
H_\mu (f)$ is also well defined and that, furthermore, $$\mathcal
H_\mu (f)(z)=I_\mu (f)(z),\quad z\in \mathbb D.$$ Now we have,
$$\iu(f)^\prime(z)=\int_{[0,1)}\frac{tf(t)}{(1-tz)^2}\,d\mu(t),\quad z\in\D,$$
so the mean of order $p$ of $\iu(f)^\prime $ has the form
$$M_p\left(r,\iu(f)^\prime\right)=\left(\frac{1}{2\pi}\int_{-\pi}^\pi \left|\int_{[0,1)}\frac{tf(t)}{(1-tr\eiteta)^2}\,d\mu(t)\right|^p\,d\theta\right)^{1/p}.$$
Using again (\ref{eqGrowth}), the Minkowski inequality and a
classical estimation of integrals we obtain that
\begin{align*}
M_p\left(r,\iu(f)^\prime\right)&\lesssim\int_{[0,1)} |f(t)|\left(\int_{-\pi}^\pi \frac{d\theta}{|1-tr\eiteta|^{2p}}\right)^{1/p}\,d\mu(t)
\\ &\lesssim \int_{[0,1)}\frac{ |f(t)|}{(1-tr)^{2-1/p}}\,d\mu(t)
\\ &\lesssim \int_{[0,1)}\frac{ \om(1-t)}{(1-t)^{1/p}(1-tr)^{2-1/p}}\,d\mu(t).
\end{align*}
At this point we split the integrals on the sets $[0,r)$ and
$[r,1)$.
\par In the first integral we use that $x^{-1/p}\om(x)\nearrow\infty$, as $x\searrow0$, and the fact that if $\mu$ is a Carleson measure
(so that  $\mu_n=\int_{[0,1)} t^n\,d\mu(t)\lesssim \frac{1}{n}$) to
obtain
\begin{align*}
\int_{[0,r)}\frac{ \om(1-t)}{(1-t)^{1/p}(1-tr)^{2-1/p}}\,d\mu(t)&\le \frac{ \om(1-r)}{(1-r)^{1/p}}\int_{[0,r)}\frac{d\mu(t)}{(1-tr)^{2-1/p}}
\\&\le \frac{ \om(1-r)}{(1-r)^{1/p}}\int_{[0,1)}\frac{d\mu(t)}{(1-tr)^{2-1/p}}
\\ &\lesssim \frac{ \om(1-r)}{(1-r)^{1/p}}\sum\limits_{n=1}^\infty n^{1-1/p}r^n\int_{[0,1)} t^n\,d\mu(t)
\\ &\lesssim \frac{ \om(1-r)}{(1-r)^{1/p}}\sum\limits_{n=1}^\infty\frac{r^n}{n^{1/p}}
\\ &\lesssim \frac{ \om(1-r)}{(1-r)} .
\end{align*}
In the second integral we use that $\om$ is an increasing weight and
the fact that the measure $\mu$ being  a Carleson measure is
equivalent to saying that the measure $\nu$ defined by
$d\nu(t)=\frac{d\mu(t)}{(1-t)^{1/p}}$ is a $1-\frac{1}{p}$-Carleson
measure so that the moments $\nu_n$ of $\nu$ satisfy $\nu_n\lesssim
\frac{1}{n^{1-\frac{1}{p}}}$. Then we obtain
\begin{align*}
\int_{[r,1)}\frac{ \om(1-t)}{(1-t)^{1/p}(1-tr)^{2-1/p}}\,d\mu(t)&\le  \om(1-r)\int_{[r,1)}\frac{d\nu(t)}{(1-tr)^{2-1/p}}
\\ &\le  \om(1-r)\int_{[0,1)}\frac{d\nu(t)}{(1-tr)^{2-1/p}}
\\ &\lesssim  \om(1-r)\sum\limits_{n=1}^\infty n^{1-1/p}r^n\int_{[0,1)}t^n\,d\nu(t)
\\ &\lesssim  \om(1-r)\sum\limits_{n=1}^\infty r^n
\\ &=\frac{ \om(1-r)}{(1-r)} .
\end{align*}
Therefore $\iu(f)\in\lao$ and then the operator $I_\mu $ (and hence
the operator $\mathcal H_\mu $) is bounded from $\lao $ into itself.
\end{proof}

\par\medskip

\end{document}